\newtheorem{theorem}{Theorem}[section]
\newtheorem{lemma}[theorem]{Lemma}
\newtheorem{proposition}[theorem]{Proposition}
\theoremstyle{definition}
\newtheorem{definition}[theorem]{Definition}
\newtheorem{example}[theorem]{Example}
\theoremstyle{remark}
\newtheorem{remark}[theorem]{Remark}
\numberwithin{equation}{section}
\let\oldmarginpar\marginpar
\renewcommand\marginpar[1]{\-\oldmarginpar[\raggedleft\footnotesize #1]
{\raggedright\footnotesize #1}}
\theoremstyle{plain}
\newtheorem*{theorem*}{Theorem}
\newtheorem*{corollary*}{Corollary}
\newcommand{\C}{\mathbb{C}}
\newcommand{\R}{\mathbb{R}}
\newcommand{\Z}{\mathbb{Z}}
\newcommand{\A}{\mathcal{A}}
\newcommand{\T}{\mathcal{T}}
\renewcommand{\emptyset}{\varnothing}
\newcommand{\Lie}{\text{Lie}}
\DeclareMathOperator{\Hom}{Hom}
\begin{document}

\title{BV operators of the Gerstenhaber algebras of holomorphic polyvector fields on toric varieties}

\author{Yang Deng}
\address{School of Mathematics and Statistics, Wuhan University, Wuhan, 430072, China}
\email{yangdeng@whu.edu.cn}

\author{Wei Hong}
\address{School of Mathematics and Statistics, Wuhan University, Wuhan, 430072, China}
\address{Hubei Key Laboratory of Computational Science, Wuhan University, Wuhan, 430072, China}
\email{hong\textunderscore  w@whu.edu.cn}

\keywords{toric variety, holomorphic polyvector fields, Gerstenhaber algebra, BV operator}

\begin{abstract}
The vector space of holomorphic polyvector fields on any complex manifold has a natural Gerstenhaber algebra structure. In this paper, we study BV operators of the Gerstenhaber algebras of holomorphic polyvector fields on smooth compact toric varieties. We give a necessary and sufficient condition  for the existence of BV operators of the Gerstenhaber algebra of holomorphic polyvector fields on any smooth compact toric variety.
\end{abstract}

\maketitle

%%%%%%%%%%%%%%%%%%%%%%%%%%%%%%%%%%%%%%%%%%%%%%%%%%%%%%%%%%%%%%%%%%%%%%%%
%%%%%%%%%%%%%%%%%%%%%%%%%%%%%%%%%%%%%%%%%%%%%%%%%%%%%%%%%%%%%%%%%%%%%%%%

\section{introduction}\label{sect-Intr}

Polyvector fields appeared in the recent work of many mathematicians. 
Barannikov and Kontsevich \cite{B-K 98} show that polyvector fields play important roles in deformation theory and mirror symmetry. 
Polyvector fields also appeared in the work of Bandiera, Sti\'{e}non and Xu \cite{B-S-Xu 19}.
In differential geomtry, holomorphic polyvector fields appeared in the work of Hitchin  \cite{Hitchin 11}. 
It is well known that the vector space of polyvector fields on a smooth manifold $M$ has 
a Gerstenhaber algebra structure by the wedge product and the Schouten bracket of polyvector fields.  In the same way, the vector space of holomorphic polyvector fields on a complex manifold $X$ becomes a Gerstenhaber algebra.
And the Gerstenhaber algebra structure of polyvector fields on the algebraic torus is studied in  \cite{M-R 19}. 

This paper is a subsequent work of \cite{Hong 20}. For any smooth compact toric variety $X$,  the vector space of holomorphic polyvector fields on $X$ has been given an explicit description in \cite{Hong 20}. 
In this paper, we study Batalin-Vilkovisky operators for the Gerstenhaber algebras of holomorphic polyvector fields on smooth compact toric varieties. 
We give a necessary and sufficient condition for the existence of BV operators. 
 And we also give the explicit forms of the BV operators.

{\bf Acknowledgements.} 
Deng would like to thank Prof. Yuping Tu and Prof. Fengwen An for their help during his study in Wuhan University.
Hong's research is partially supported by NSFC grant 12071241 and NSFC grant 12071358.

\section{Preliminary}
\subsection{Gerstenhaber algebra and Batalin-Vilkovisky algebra}
Let us recall some classical knowledge of Gerstenhaber algebra and Batalin-Vilkovisky algebra (BV algebra). One may consult \cite{Roger 09} and \cite{Xu 99}.

Let $\A=\bigoplus_{k\in\Z} \A^k$ be a $\mathbb{Z}$-graded vector space over a field 
$\mathbf{k}$. For any element $a\in \A^k$, we denote by $|a|=k$ the degree of $a$.
Let $\A[1]=\bigoplus_{k\in\Z} \A[1]^k$, where $\A[1]^k=\A^{k+1}$.
\begin{definition}
A graded vector spcae $\A=\bigoplus_{k\in\Z} \A^k$ is a Gerstenhaber algebra if one has:
\begin{enumerate}
\item 
An associative, graded commutative multiplication 
$$\A\times \A\rightarrow \A,$$
\i.e., for every $a,b,c\in \A$, we have
\begin{itemize}
\item $a\cdot(b\cdot c)=(a\cdot b)\cdot c$,
\item $a\cdot b=(-1)^{|a||b|}b\cdot a$.
\end{itemize}
\item
A graded Lie algebra bracket $[,]: \A[1]\times \A[1]\rightarrow \A[1]$, \i.e.,
we have $[\A^k, \A^l]\subseteq\A^{k+l-1}$, and for every $a,b,c\in \A$,
\begin{itemize}
\item $[a, b]=-(-1)^{(|a|-1)(|b|-1)}[b,a]$ (graded anti-symmetry),
\item $(-1)^{(|a|-1)(|c|-1)}[[a,b],c]+(-1)^{(|b|-1)(|a|-1)}[[b,c],a]+(-1)^{(|c|-1)(|b|-1)}[[c,a],b]=0$ (graded Jacobi identity).
\end{itemize}
\item
The multiplication and bracket satisfying the Leibniz relation, \i.e.,
for every $a,b,c\in \A$,
$$[a, b\cdot c]=[a,b]\cdot c+(-1)^{(|a|-1)|b|}b\cdot[a,c].$$
\end{enumerate}
\end{definition}

We have the following examples of Gerstenhaber algebras.
\begin{example}
Let $M$ be a smooth real manifold. Let 
$$\A^k=\begin{cases}
\Gamma(M,\wedge^k TM),\quad k>0;\\
C^{\infty}(M), \quad k=0;\\
0,\quad k<0.
\end{cases}$$
Then $\A=\bigoplus_{k\in\Z} \A^k$ is a $\mathbb{Z}$-graded vector space over $\R$.
Let the multiplication on $\A$ be the wedge product on $\Gamma(M,\wedge^k TM)$.
The bracket $[,]$ is given by the Schouten bracket of polyvector fields.
Then $(\A, \wedge, [,])$ is a Gerstenhaber algebra.
\end{example}

\begin{example}\label{poly-Gerst}
Let $X$ be a complex manifold. 
Let $\A^k=H^0(X,\wedge^k \T_X)$ be the vector space of holomorphic $k$-vector fields on $X$ for $k>0$. Let $\A^0=H^0(X,\mathcal{O}_X)$ be the vector space of holomorphic functions on $X$. And let $\A^k=0$ for $k<0$. 
Then $\A=\bigoplus_{k\in\Z} \A^k$ is a $\mathbb{Z}$-graded vector space over $\C$.
Let the multiplication on $\A$ be the wedge product of polyvector fields.
The bracket $[,]$ is given by the Schouten bracket of polyvector fields.
Then $(\A, \wedge, [,])$ is a Gerstenhaber algebra.
\end{example}

Next we give two equivalent definitions of Batalin-Vilkovisky algebras.
\begin{definition}
A graded vector space $\A=\bigoplus_{k\in\Z} \A^k$ over $\mathbf{k}$ is a 
Batalin-Vilkovisky algebra (BV algebra) if 
\begin{enumerate}
\item $\A$ is an associative, graded commutative algebra, i.e., there is an associative, graded commutative multiplication on $\A$.
\item
There is a differential $\mathbf{k}$-linear operator $D: \A\rightarrow \A$ of order $2$ and degree $-1$. 
Here $D$ is a differential operator of order $2$ in the sense that, for all $a,b,c\in \A$, 
\begin{gather}
D(abc)=D(ab)c+(-1)^{|a|}a D(bc)+(-1)^{|b|(|a|+1)}bD(ac)
-D(a)bc-(-1)^{|a|}aD(b)c-(-1)^{|a|+|b|}ab D(c)
\end{gather}
\item $D^2=0$.
\end{enumerate}
\end{definition}

A BV algebra $(\A, \cdot, D)$ is always a Gerstenhaber algebra with the same multiplication, and the bracket is given by
\begin{equation*}
[a,b]=(-1)^{|a|}(D(ab)-D(a)b-(-1)^{|a|}aD(b))
\end{equation*}
for all $a,b\in \A$.

The following is an equivalent definition of BV algebra.
\begin{definition}
Given a Gerstenhaber algebra $(\A, \cdot, [,])$, if there is exist a degree $-1$ 
$\mathbf{k}$-linear operator $D:\A\rightarrow \A$ such that
\begin{enumerate}
\item
\begin{equation}
[a,b]=(-1)^{|a|}(D(ab)-D(a)b-(-1)^{|a|}aD(b))
\end{equation}
for any $a,b\in \A$,
\item $D^2=0$,
\end{enumerate}
then the Gerstenhaber algebra $(\A,\cdot, [,])$  is said to be exact, 
and $D$ is called a generating operator. 
\end{definition}
That a Gerstenhaber algebra $(\A,\cdot, [,])$  is exact is equivalent to that $(\A,\cdot, D)$ is a BV algebra. And the generating operator $D$ is also called a BV operator.

\subsection{Holomorphic polyvector fields on toric varieties}
Recall that a toric variety \cite{Cox} is an irreducible variety $X$ such that
\begin{enumerate}
\item $T=(\C^*)^n$ is a Zariski open set of $X$, 
\item the action of $T=(\C^*)^n$ on itself extends to an action of $T=(\C^*)^n$ on $X$.
\end{enumerate}
One may consult \cite{Oda} and \cite{Fulton} for more details of toric varieties. 

Let $N=\Hom(\C^*,T)\cong\Z^n$ and $M=\Hom(T, \C^*)$.
Then $M\cong \Hom_\Z(N,\Z)$ and $N\cong \Hom_\Z(M,\Z)$.
Each element $m$ in $M$ gives rise to a character $\chi^m\in Hom(T,\C^*)$,
which can be also considered as a rational function on $X$. 
Let $N_{\R}=N\otimes_{\Z}\R\cong\R^n$, and $M_{\R}=M\otimes_{\Z}\R\cong\R^n$ 
be the dual space of $N_{\R}$.

Let $X=X_\Delta$ be a smooth compact toric variety associated with a fan $\Delta$ in $N_\R\cong\R^n$. 
The $T$-action on $X$ induces a $T$-action on $H^0(X, \wedge^k \T_{X})$, 
the vector space of holomorphic $k$-vector fields on $X$. 
Denote by $V_I^k$ the weight space corresponding to the character $I\in M$. 
We have
\begin{equation}\label{VIk-eqn}
H^0(X, \wedge^k\T_{X})=\bigoplus_{I\in M}V_I^k.
\end{equation}
Since $H^0(X, \wedge^k\T_{X})$ is a finite dimensional vector space, 
there are only finite elements $I\in M$ such that $V_I^k\neq 0$.

Some technical notations are necessary for us in this paper.

\begin{enumerate}
\item
Let $N_\C=N\otimes_{\Z}\C$. 
Since $T\cong N\otimes_\Z\C^*$, we have $\Lie(T)\cong N_\C$, 
where $\Lie(T)$ denotes the Lie algebra of $T$. 
We define a map $\rho:N_\C\rightarrow \mathfrak{X}(X)$ by
\begin{equation} \label{rho-def-eqn}
\rho:N_\C=N\otimes_\Z\C\cong \Lie(T)\rightarrow \mathfrak{X}(X),
\end{equation}
where $\Lie(T)\rightarrow \mathfrak{X}(X)$ is defined by the infinitesimal action of the Lie algebra $\Lie(T)$ on $X$. 
And the map $\rho:N_\C\rightarrow \mathfrak{X}(X)$ is determined by
$$\rho(x)(\chi^m)=\langle x,m\rangle\chi^m$$
for any $x\in N_\C$ and $m\in M$.
Since the action map $T\times X\rightarrow X$ is holomorphic, 
the images of $\rho$ are holomorphic vector fields on $X$.
By the abuse of notation, the induced maps
\begin{equation} \label{rho-def-eqn2}
\wedge^k N_\C\rightarrow H^0(X,\wedge^k\T_X): 
\quad x_1\wedge\ldots\wedge x_k\rightarrow \rho(x_1)\wedge\ldots\wedge\rho(x_k),
\quad\forall x_1,\ldots, x_k\in N_{\C},
\end{equation}
 are also denoted by $\rho$ for $2\leq k\leq n$.
 Let $W=\rho(N_\C)$, $W^k=\wedge^k W$ and $W^0=\C$. 
Then $W^k$ is a subspace of $H^0(X, \wedge^k\T_{X})$. 
And the map $\wedge^k N_\C\xrightarrow{\rho} W^k$ is an isomorphism.

\item
Let $\Delta(k)$ $(0\leq k\leq n)$ be the set of all $k$-dimensional cones in 
$\Delta$.
Suppose that $\alpha_t ~(1\leq t\leq r)$ are all one dimensional cones in $\Delta(1)$. 
Let  $e(\alpha_t)\in N$ $(1\leq t\leq r)$ be the corresponding primitive elements, i.e., the unique generator of $\alpha_t \cap N$. 
Let $P_\Delta$ be a polytope in $M_\R$ defined by
\begin{equation*}
P_\Delta=\bigcap_{\alpha_t\in \Delta(1)}\{I\in M_\R\mid \langle I,e(\alpha_t)\rangle\geq -1\}.
\end{equation*}
Since $X_{\Delta}$ is compact, $P_{\Delta}$ is a compact polytope in $M_{\R}$. 
Let 
\begin{equation*}
S_\Delta=\{I\mid I\in M\cap P_{\Delta}\}.
\end{equation*}
Then $S_{\Delta}$ is a non-empty finite set and $0\in S_{\Delta}$.
We denote by $P_\Delta(i)$ the set of all $i$-dimensional faces of the polytope $P_\Delta$.  Let
\begin{equation*}
S(\Delta,i)=\bigcup_{F_j\in P_\Delta(n-i)}\{I\in int(F_j)\cap M\}
\end{equation*}
 for $0\leq i\leq n$, where $int(F_j)$ denotes the relative interior of the face $F_j$.
Let
\begin{equation*}
 S_k(\Delta)=\bigcup_{0\leq i\leq k}S(\Delta,i)
 \end{equation*}
  for all $0\leq k\leq n$. 
  
\item  
For any $I\in S_{\Delta}$, there exists a unique face $F_I(\Delta)$ of $P_{\Delta}$ 
such that $I\in int(F_I(\Delta))\cap M$. 
Let
\begin{equation*}
E_I(\Delta)=\{e(\alpha_t)\mid\alpha_t \in\Delta(1)~\text{and}~\langle I,e(\alpha_t)\rangle=-1\}.
\end{equation*} 
Suppose that $E_I(\Delta)=\{e(\alpha_{s_1}),\ldots e(\alpha_{s_l})\}$, 
where $1\leq s_1<\ldots<s_l\leq r$.
Let $F^{\perp}_I(\Delta)\subseteq N_{\R}$ be the normal space of $F_I(\Delta)$,
defined by
\begin{equation*}
F^{\perp}_I(\Delta)=\sum_{1\leq t\leq l} \R\cdot e(\alpha_{s_t}).
\end{equation*}
And we set
\begin{equation*} 
F^{\perp}_I(\Delta)=0 \quad\text{if}\quad E_I(\Delta)=\emptyset.
\end{equation*}

\item
Suppose that $I\in S(\Delta,i)$. 
Then we have $\dim F_I(\Delta)=n-i$ and $\dim F^{\perp}_I(\Delta)=i$.
Considering $F_I^{\perp}(\Delta)\otimes\C$ as a subspace of $N_{\C}$,
we have $\wedge^i (F^{\perp}_I(\Delta)\otimes\C)\cong\C$.

Let $\hat{F}_I(\Delta)$ be the affine space in $M_{\C}$ with the lowest dimension such that $F_I(\Delta)\subseteq \hat{F}_I(\Delta)$, where $F_I(\Delta)\subset M_{\R}$ is considered as a subset of $M_{\C}$ in the natural way. Then we have
\begin{equation}
\hat{F}_I(\Delta)=\{I+\alpha\mid \alpha\in (F^{\perp}_I(\Delta)\otimes\C)^{\perp} \},
\end{equation}
where $(F^{\perp}_I(\Delta)\otimes\C)^{\perp} \subseteq M_{\C}$ is the annihilator of 
 $F_I^{\perp}(\Delta)\otimes\C\subseteq N_{\C}$.
 Especially, if $I\in M$ is a vertex of  $P_{\Delta}$, then we have $\hat{F}_I(\Delta)=\{I\}$.

\item
Let 
\begin{equation*}
N_I^k(\Delta)=
\begin{cases}
\wedge^i(F_I^{\perp}(\Delta)\otimes\C)\wedge(\wedge ^{k-i} N_{\C})\quad 
&\text{for}~ 0<i\leq k,\\
\wedge^k N_{\C}&\text{for}~ i=0,\\
0\quad &\text{for}~i>k.
\end{cases}
\end{equation*}
Then $N_I^k(\Delta)$ is a subspace of $\wedge^k N_{\C}$.
We have to notice that 
\begin{equation*}
N_I^k(\Delta)
\begin{cases}
\neq 0\quad\text{for}~~I\in S_k(\Delta)\\
=0\quad\text{for}~~I\notin S_k(\Delta)\\
\end{cases}
\end{equation*}
Moreover, we have
\begin{equation}
N_I^n(\Delta)=\wedge^n N_{\C}
\end{equation}
for all $I\in S_{\Delta}$.

\item
Let $$W_I^{k}(\Delta)=\rho(N_I^k(\Delta))$$ 
be a subspace of $H^0(X,\wedge^k\T_{X})$.
We define
\begin{equation*}
V_I^k(\Delta)=\chi^I\cdot W_I^k(\Delta)=\{\chi^I\cdot w\mid w\in W_I^k(\Delta)\}.
\end{equation*}
The elements in $V_I^k(\Delta)$ are considered as meromorphic $k$-vector fields on $X=X_{\Delta}$. In \cite{Hong 20}, it is proved that for any $I\in S_k(\Delta)$, the
$k$-vector fields in $V_I^k(\Delta)$ have moveable singularities. And we use them to represent the corresponding holomorphic $k$-vector fields. 
\end{enumerate}

The next theorem gives a description of the holomorphic polyvector fields on toric varieties. 
\begin{theorem}\cite{Hong 20}\label{polyvector-thm}
Let $X=X_ \Delta$ be a smooth compact toric variety associated with a fan $ \Delta$ in $N_\R$. Then we have the following decomposition
\begin{equation}\label{polyvect-thm-eqn1}
H^0(X,\wedge^k\T_{X})=\bigoplus_{I\in S_k(\Delta)}V_I^k(\Delta)
\end{equation}
for all $0\leq k\leq n$, where $V_I^k(\Delta)=V_{-I}^k$ for all $I\in S_k(\Delta)$.
\end{theorem}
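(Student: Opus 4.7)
The plan is to combine the $T$-equivariant weight decomposition in~\eqref{VIk-eqn} with a local extension analysis at each torus-invariant prime divisor $D_{\alpha_t}$ (the closure of the codimension-one orbit associated to $\alpha_t\in\Delta(1)$). By~\eqref{VIk-eqn} it suffices to determine each weight space $V_J^k$ explicitly. Under the standard convention $(t\cdot f)(x)=f(t^{-1}x)$ the character $\chi^I$ has weight $-I$, and since the vector fields in $\rho(N_\C)\subseteq\mathfrak{X}(X)$ are $T$-invariant, the section $\chi^I\cdot\rho(\xi)$ has weight $-I$; this explains the relabeling $V_I^k(\Delta)=V_{-I}^k$ in the statement.

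First I would restrict to the dense torus $T\subset X$. There the tangent sheaf is trivialized by $\Lie(T)\cong N_\C$, so any holomorphic $k$-vector field on $T$ of weight $-I$ has the unique form $\chi^I\cdot\rho(\xi)$ with $\xi\in\wedge^k N_\C$; thus $V_I^k(\Delta)$ embeds into $\wedge^k N_\C$ via $\chi^I\rho(\xi)\leftrightarrow\xi$. The problem becomes identifying the $\xi$ for which $\chi^I\rho(\xi)$ extends from $T$ to a global section on $X_\Delta$. Smoothness of $X$ gives affine charts $U_\sigma\cong\C^n$ for $\sigma\in\Delta(n)$, so extension reduces to a local question at each prime divisor $D_{\alpha_t}$.

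A direct order-of-vanishing calculation in coordinates adapted to $e(\alpha_t)$ shows that $\chi^I\rho(\xi)$ extends across $D_{\alpha_t}$ if and only if either $\langle I,e(\alpha_t)\rangle\geq 0$, imposing no constraint on $\xi$, or $\langle I,e(\alpha_t)\rangle=-1$ and $\xi$ contains $e(\alpha_t)$ as a wedge factor; while $\langle I,e(\alpha_t)\rangle\leq -2$ forces the weight space to vanish. Assembling these conditions over all rays, the inequalities $\langle I,e(\alpha_t)\rangle\geq -1$ for every $\alpha_t\in\Delta(1)$ is exactly the condition $I\in P_\Delta\cap M=S_\Delta$; the rays attaining equality form $E_I(\Delta)$, which span $F_I^\perp(\Delta)$ by definition of the face $F_I(\Delta)$. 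Requiring $\xi$ to contain each $e(\alpha_{s_j})\in E_I(\Delta)$ as a wedge factor simultaneously yields $\xi\in\wedge^i(F_I^\perp(\Delta)\otimes\C)\wedge(\wedge^{k-i}N_\C)=N_I^k(\Delta)$, where $i=\dim F_I^\perp(\Delta)$, and forces vanishing whenever $i>k$, i.e. $I\notin S_k(\Delta)$. This produces the identification $V_I^k(\Delta)=\chi^I\cdot\rho(N_I^k(\Delta))$ and the decomposition~\eqref{polyvect-thm-eqn1}.

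The main obstacle I anticipate is the simultaneous wedge-factor step: verifying that imposing distinct rays $e(\alpha_{s_1}),\ldots,e(\alpha_{s_l})$ as wedge factors of $\xi$ produces exactly the tensor-type subspace $\wedge^i(F_I^\perp(\Delta)\otimes\C)\wedge(\wedge^{k-i}N_\C)$ rather than something larger or smaller. Here smoothness of $X_\Delta$ is essential: for any maximal cone $\sigma$ whose torus-fixed point lies in the orbit closure associated to $F_I(\Delta)$, the primitive generators of $\sigma$ form a $\Z$-basis of $N$, so in particular the rays in $E_I(\Delta)$ are $\C$-linearly independent. This transversality is what makes the local extension conditions at different divisors combine cleanly into the clean combinatorial form of $N_I^k(\Delta)$; without smoothness one would only obtain a proper subspace of this form and the statement would need to be modified.
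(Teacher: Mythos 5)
The paper itself contains no proof of this statement; it is imported from \cite{Hong 20}, where it is established by essentially the strategy you describe: decompose $H^0(X,\wedge^k\T_X)$ into $T$-weight spaces as in \eqref{VIk-eqn}, identify each weight space with a subspace of $\wedge^k N_\C$ by restricting to the dense torus, and determine that subspace by an order-of-vanishing computation at each invariant divisor $D_{\alpha_t}$ in a smooth chart $U_\sigma\cong\C^n$. Your single-divisor criterion (no condition if $\langle I,e(\alpha_t)\rangle\geq 0$; the factor $e(\alpha_t)$ must divide $\xi$ if $\langle I,e(\alpha_t)\rangle=-1$; vanishing if $\langle I,e(\alpha_t)\rangle\leq -2$) is correct. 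One point you leave implicit and should state: the reduction to divisors uses normality of $X$, since $X\setminus(T\cup\bigcup_t D_{\alpha_t})$ has codimension at least $2$ and sections of $\wedge^k\T_X$ extend over such sets.

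The genuine flaw is in the step you yourself flag as the main obstacle. It is \emph{not} true that the rays in $E_I(\Delta)$ lie in a common maximal cone, hence not true that smoothness makes them linearly independent. For example, take the smooth complete fan in $\Z^2$ with rays $(1,0)$, $(1,1)$, $(0,1)$, $(-1,0)$, $(-1,-1)$, $(0,-1)$, $(1,-1)$ (each pair of adjacent rays is a $\Z$-basis). Then $I=(-1,0)$ is a vertex of $P_\Delta$ and $E_I(\Delta)=\{(1,0),(1,1),(1,-1)\}$ consists of three pairwise non-proportional vectors spanning a $2$-dimensional space, and no cone of $\Delta$ contains all three. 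Fortunately the conclusion you want does not require independence: for any vectors $v_1,\dots,v_l$ spanning an $i$-dimensional subspace $W\subseteq N_\C$ one has
\begin{equation*}
\bigcap_{j=1}^{l}\bigl(v_j\wedge\wedge^{k-1}N_\C\bigr)=(\wedge^i W)\wedge(\wedge^{k-i}N_\C),
\end{equation*}
where the inclusion $\subseteq$ follows by intersecting only over a subset of the $v_j$ forming a basis of $W$, and $\supseteq$ follows because $\wedge^i W\subseteq v\wedge\wedge^{i-1}W$ for every nonzero $v\in W$. Replacing your independence argument by this linear-algebra lemma repairs the proof; smoothness is still genuinely used, but only to guarantee the charts $U_\sigma\cong\C^n$ on which the local vanishing-order computation is performed.
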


\subsection{The Gerstenhaber algebra $(\A_{T}, \wedge, [,])$}
In the case of $X=T=(\C^*)^n$, the Gerstenhaber algebra structure of the polyvector fields (in algebraic version) has been studied in \cite{M-R 19}.  
Let us recall the results.

Any $k$-polyvector field (in algebraic version) on $X=T$ can be written as the finite sum
$$\sum_{I\in M}c_I\cdot\chi^I\cdot\rho(A_I),$$
where $c_I\in \C$, $A_I\in\wedge^k N_{\C}$.

Let 
\begin{equation*}
\C[M]=\{\sum_{I\in M}c_I\cdot\chi^I\mid c_I\in \C,~~~ \text{and there are only finite $c_I\neq 0$ in the sum}\},
\end{equation*} 
which can be considered as the group algebra of the abelian group $M$.
 
Let $$\A_{T}^k=\C[M]\otimes_{\C}\wedge^k N_{\C},$$  where $0\leq k\leq n$.
Then the vector space of $k$-polyvector field (in algebraic version) on $X=T$ is isomorphic to $\A_T^k$ by the map
$$\sum_{I\in M}c_I\cdot\chi^I\cdot\rho(A_I)\longleftrightarrow
\sum_{I\in M}c_I \cdot\chi^I\otimes A_I$$

Let $$\A_T=\bigoplus_{k\in\Z} \A^k_{T},$$ where $\A_T^k=0$ for $k<0$ or $k>n$.
The  Gerstenhaber algebra structure of the polyvector fields on $X=T$ can be described in the following way.

\begin{proposition}\cite{M-R 19}\label{MR-prop}
\begin{enumerate}
\item
The vector space $(\A_T,\cdot, [,])$ is a Gerstenhaber algebra, with the multiplication and the bracket given by the following way.
\begin{itemize}
\item
 The multiplication on $\A_T$ is given by 
$$(\chi^I\otimes A_I)\cdot (\chi^J\otimes A_J)= \chi^{I+J}\otimes (A_I\wedge A_J),$$
where $I,J\in M$, $A_I\in\wedge^k N_{\C}, A_J\in\wedge^l N_{\C}$.
\item The bracket $[,]$ on $\A_T$ is given by
$$[\chi^I\otimes A_I, \chi^J\otimes A_J]=(-1)^{k+1}\chi^{I+J}\otimes 
((\imath_{J} A_I)\wedge  A_J+(-1)^{k}A_I\wedge(\imath_I A_J)), $$
where $I,J\in M$, $A_I\in\wedge^k N_{\C}, A_J\in\wedge^l N_{\C}$, and 
$\imath_J(A_I)$ is the contraction of $J\in M\subset M_{\C}$ 
with $A_I\in\wedge^k N_{\C}$.
\end{itemize}
\item
Let $D: \A_T\rightarrow \A_T$ be a $\C$-linear operator given by 
$$D(\chi^I\otimes A_I)=-\chi^I\otimes (\imath_I A_I),$$
where $I\in M$ and $A_I\in\wedge^k N_{\C}$. Then $D$ is a BV operator for the Gerstenhaber algebra $(\A_T,\cdot, [,])$.
\end{enumerate}
\end{proposition}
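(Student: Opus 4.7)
The plan is to transport the Gerstenhaber structure on algebraic polyvector fields on $T = (\C^*)^n$ to $\A_T$ via the isomorphism $\sum c_I \chi^I \otimes A_I \leftrightarrow \sum c_I \chi^I \rho(A_I)$. Under this identification $\A_T$ is precisely the algebra of algebraic polyvector fields on $T$, so the wedge product and Schouten bracket make $(\A_T, \cdot, [,])$ a Gerstenhaber algebra by the algebraic analogue of Example 2.2. What remains is to verify that these two operations take the stated explicit forms.

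For the multiplication, the formula is immediate from $\chi^I \chi^J = \chi^{I+J}$ together with the fact that $\rho \colon \wedge^\bullet N_\C \to H^0(T, \wedge^\bullet \T_T)$ intertwines wedge products. For the bracket, I would isolate three basic ingredients: (a) two functions Schouten-commute, so $[\chi^I, \chi^J] = 0$; (b) $T$ is abelian, so $[\rho(x), \rho(y)] = 0$ for $x, y \in N_\C$, and the Koszul formula for Schouten brackets of decomposable multivectors then forces $[\rho(A_I), \rho(A_J)] = 0$; (c) since $\rho(x)(\chi^I) = \langle x, I \rangle \chi^I$, iterating the graded Leibniz rule in the second slot upgrades $[\chi^I, \rho(x)] = -\langle x, I \rangle \chi^I$ to $[\chi^I, \rho(A)] = -\chi^I \rho(\imath_I A)$ for any $A \in \wedge^\bullet N_\C$. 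Splitting $[\chi^I \rho(A_I), \chi^J \rho(A_J)]$ by the graded Leibniz rule into pieces and reassembling via (a)--(c) then produces the stated bracket formula.

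For part (2), $D^2 = 0$ is immediate from $\imath_I \imath_I = 0$ on $\wedge^\bullet N_\C$. For the BV identity on generators $a = \chi^I \otimes A_I$, $b = \chi^J \otimes A_J$, the key observation is that $\imath_{I+J}$ acts as a graded derivation of $\wedge^\bullet N_\C$ and decomposes as $\imath_I + \imath_J$. Expanding $\imath_{I+J}(A_I \wedge A_J)$ produces four terms; the two terms featuring $\imath_I A_I$ or $\imath_J A_J$ are exactly cancelled when one subtracts $D(a) b$ and $(-1)^{|a|} a D(b)$ from $D(ab)$, and the remaining two, after multiplication by $(-1)^{|a|}$, reproduce the bracket formula from part (1).

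The principal difficulty throughout is sign bookkeeping: Koszul signs from the shifted grading (degrees $k-1$ and $l-1$), from transposing factors such as $\rho(\imath_J A_I)$ past $\rho(A_J)$, and from the various applications of the graded Leibniz rule must all be combined consistently. Once the three building blocks (a)--(c) are in hand, no further geometric input is required, and the rest of the argument is essentially a clean sign check.
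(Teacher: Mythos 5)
Your argument is correct, but there is nothing in the paper to compare it against: Proposition~\ref{MR-prop} is quoted from \cite{M-R 19} and the paper supplies no proof (nor does it prove the analogous Proposition~\ref{Gerst-prop}, which it says follows ``in a similar way''). So your verification is, in effect, filling a gap the authors chose to leave to the reference. The route you take is the natural one and it goes through. Part (1): identifying $\A_T$ with algebraic polyvector fields on $T$, the product formula is immediate, and your three building blocks for the bracket are each correct --- in particular (c) does yield $[\chi^I,\rho(A)]=-\chi^I\rho(\imath_I A)$ with the convention $\imath_I(x_1\wedge\cdots\wedge x_k)=\sum_i(-1)^{i-1}\langle x_i,I\rangle\, x_1\wedge\cdots\widehat{x_i}\cdots\wedge x_k$, and reassembling by the Leibniz rule gives the stated formula. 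Part (2) is where your sketch is already essentially a complete proof: $D^2=0$ from $\imath_I\imath_I=0$, and expanding $\imath_{I+J}(A_I\wedge A_J)=(\imath_IA_I)\wedge A_J+(-1)^kA_I\wedge(\imath_IA_J)+(\imath_JA_I)\wedge A_J+(-1)^kA_I\wedge(\imath_JA_J)$ shows that subtracting $D(a)b$ and $(-1)^{k}aD(b)$ kills exactly the first and last terms, leaving $(-1)^{k+1}\chi^{I+J}\otimes((\imath_JA_I)\wedge A_J+(-1)^kA_I\wedge(\imath_IA_J))$ after multiplying by $(-1)^{k}$, which is $[a,b]$. (Note that this also makes part (1)'s bracket formula a consequence of part (2) plus the generating-operator identity, once one knows the Schouten bracket is generated by some such $D$; you could shorten the proof that way.) The only caveat is that your part (1) bracket computation is stated as a plan rather than carried out; the sign bookkeeping there is genuinely the whole content, but your checks (a)--(c) are the right ingredients and the computation closes. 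One last point worth flagging: the paper's Remark after the proposition records that \cite{M-R 19} actually states the result for $-D$ and $-[,]$; the two formulations are equivalent by linearity of the generating identity in $D$ and $[,]$ simultaneously, so your verification of the version as stated is consistent with the cited source.
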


\begin{remark}
In the second part of Proposition \ref{MR-prop}, the original statement in \cite{M-R 19} 
is that $-D$ is a BV operator for the Gerstenhaber algebra $(\A_T,\cdot, -[,])$, 
where the bracket on $\A_T$ is choosing to be
$-[,]$ (negative the bracket $[,]$), the operator $D$, the bracket $[,]$ and the multiplication $\cdot$ is shown in the first part of Proposition \ref{MR-prop}.
\end{remark}

\section{BV operators of the Gerstenhaber algebras of holomorphic polyvector fields on toric varieties}
\subsection{The Gerstenhaber algebra $(\A_{\Delta}, \wedge, [,])$}
 Let $X=X_{\Delta}$ be a smooth compact toric variety. 
Let $\A^k_{\Delta}=H^0(X,\wedge^k \T_X)$ be the vector space of holomorphic $k$-vector fields on $X$ for $k>0$. 
Let $\A^0_{\Delta}=H^0(X,\mathcal{O}_X)=\C$ and 
let $\A^k_{\Delta}=0$ for $k<0$ and $k>n$.
Let $$\A_{\Delta}=\bigoplus_{k\in\Z} \A^k_{\Delta}.$$
As shown in Example \ref{poly-Gerst}, $(\A_{\Delta}, \wedge, [,])$ is a Gerstenhaber algebra.

By Theorem \ref{polyvector-thm}, we have
\begin{equation}
\A^k_{\Delta}=\bigoplus_{I\in S_k(\Delta)}V_I^k(\Delta).
\end{equation}
The elements in $V_I^k(\Delta)$ can be written as 
$\chi^I\cdot\rho(A_I)$, where $I\in S_k(\Delta)$ and $A_I\in N_I^k(\Delta)$.

\begin{proposition}\label{Gerst-prop}
\begin{enumerate}
\item
The Gerstenhaber algebra structure on $(\A_{\Delta}, \wedge, [,])$ can be described by the following way. For any $\chi^I\cdot\rho(A_I)\in V_I^k(\Delta)$ and $\chi^J\cdot\rho(A_J)\in V_J^l(\Delta)$, we have
\begin{gather*}
(\chi^I\cdot\rho(A_I))\wedge (\chi^J\cdot\rho(A_J))=\chi^{I+J}\cdot\rho(A_I\wedge A_J),\\
[\chi^I\cdot\rho(A_I), \chi^J\cdot\rho(A_J)]=(-1)^{k+1}\chi^{I+J}\cdot
\rho((\imath_{J} A_I)\wedge  A_J+(-1)^{k}A_I\wedge(\imath_I A_J)),
\end{gather*}
where $I\in S_k(\Delta), A_I\in N_I^k(\Delta)$, and $J\in S_l(\Delta), A_J\in N_J^l(\Delta)$.
As a consequence, we have $V_I^k(\Delta)\wedge V_J^l(\Delta)\subseteq V_{I+J}^{k+l}(\Delta)$ and
$[V_I^k(\Delta), V_J^l(\Delta)]\subseteq V_{I+J}^{k+l-1}(\Delta)$.
\item 
Let $\gamma: \A_{\Delta}\rightarrow \A_T$ be a $\C$-linear map defined by 
$$\gamma(\chi^I\cdot \rho(A_I))=\chi^I\otimes A_I,$$ 
where $\chi^I\cdot\rho(A_I)\in V_I^k(\Delta)$, $I\in S_k(\Delta)$ 
and $A_I\in N_I^k(\Delta)$.
Then $\gamma: \A_{\Delta}\rightarrow \A_T$ is an injective map.
And $\gamma: (\A_{\Delta}, \wedge, [,])\rightarrow (\A_{T}, \wedge, [,])$ is a morphism of Gerstenhaber algebras. Thus $(\A_{\Delta}, \wedge, [,])$ can be considered as a 
sub-Gerstenhaber algebra of $(\A_{T}, \wedge, [,])$.
\end{enumerate}
\end{proposition}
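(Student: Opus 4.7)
The plan is to prove part (2) first and then deduce part (1) as an immediate consequence. The key observation is that $\gamma$ is essentially the restriction map $\A_\Delta \to \A_T$ coming from the inclusion of the dense open torus $T \hookrightarrow X_\Delta$; this restriction automatically preserves the wedge product and the Schouten bracket because both are local operations on polyvector fields.

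For part (2), I first check that $\gamma$ is well-defined and injective. By Theorem \ref{polyvector-thm}, $\A_\Delta^k = \bigoplus_{I \in S_k(\Delta)} V_I^k(\Delta)$ with $V_I^k(\Delta) = \chi^I \cdot \rho(N_I^k(\Delta))$, and since $\rho: \wedge^k N_\C \to W^k$ is an isomorphism, the element $A_I \in N_I^k(\Delta)$ is uniquely determined by $\chi^I \rho(A_I)$. Hence the assignment $\chi^I \rho(A_I) \mapsto \chi^I \otimes A_I$ is a well-defined $\C$-linear isomorphism of each summand $V_I^k(\Delta)$ onto $\chi^I \otimes N_I^k(\Delta) \subseteq \A_T^k$, and the assembled $\gamma$ is injective because distinct weights $I$ give linearly independent subspaces of $\A_T$. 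To see that $\gamma$ is a morphism of Gerstenhaber algebras, I identify it with the restriction-to-$T$ map: a (a priori meromorphic) expression $\chi^I \rho(A_I)$ representing an element of $V_I^k(\Delta)$ is by construction regular on $T$, where it is literally the element of $\C[M] \otimes_\C \wedge^k N_\C \cong \A_T$ corresponding to $\chi^I \otimes A_I$. Since restriction to an open dense subset is injective and intertwines wedge and bracket, $\gamma$ is a Gerstenhaber algebra morphism.

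Part (1) then follows by transporting the formulas of Proposition \ref{MR-prop} through the injective morphism $\gamma$ and reading off the pre-images. The containment claims $V_I^k(\Delta) \wedge V_J^l(\Delta) \subseteq V_{I+J}^{k+l}(\Delta)$ and $[V_I^k(\Delta), V_J^l(\Delta)] \subseteq V_{I+J}^{k+l-1}(\Delta)$ are checked as follows: each right-hand side, e.g. $\chi^{I+J} \rho(A_I \wedge A_J)$, is a holomorphic polyvector field on $X_\Delta$ (being a wedge or bracket of holomorphic polyvector fields) of $T$-weight $I+J$, so by the weight decomposition \eqref{VIk-eqn} combined with Theorem \ref{polyvector-thm} it must lie in $V_{I+J}^{k+l}(\Delta)$; this subspace is zero whenever $I+J \notin S_{k+l}(\Delta)$, in which case the formula simply evaluates to zero and the containment is trivial. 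The only technical point to confirm in this route is that applying the $\A_T$-formulas of Proposition \ref{MR-prop} to $\gamma(\chi^I \rho(A_I))$ and $\gamma(\chi^J \rho(A_J))$ produces an element genuinely in the image of $\gamma$, rather than just in $\A_T$; but this is automatic because the wedge and the Schouten bracket of two holomorphic polyvector fields on $X_\Delta$ are again holomorphic, so the output lies in $\A_\Delta$ and its image under $\gamma$ must agree with the $\A_T$-computation.
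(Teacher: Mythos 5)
Your argument is correct, and it supplies a proof where the paper gives none: the paper merely remarks that Proposition \ref{Gerst-prop} ``can be proved in a similar way to Proposition \ref{MR-prop}'', i.e.\ by redoing the computation of \cite{M-R 19} for the fields $\chi^I\cdot\rho(A_I)$, and skips the details. Your route is genuinely different: you identify $\gamma$ with restriction to the dense open torus $T\subseteq X_\Delta$, observe that restriction intertwines $\wedge$ and the Schouten bracket because both are local (bidifferential) operations, establish injectivity via the weight decomposition together with the fact that $\rho:\wedge^k N_\C\to W^k$ is an isomorphism (the identity theorem on the connected manifold $X_\Delta$ would do equally well), and then transport the formulas of Proposition \ref{MR-prop} back through the injective morphism. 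What this buys is that part (1) becomes a formal consequence of part (2) plus the already-established torus case, with no new computation; the cost is that one must check the two points where the reduction could fail, and you do check them: the right-hand sides of the formulas genuinely lie in the image of $\gamma$ (because the wedge and bracket of holomorphic fields on $X_\Delta$ are again holomorphic and $T$-equivariance pins down the weight, so Theorem \ref{polyvector-thm} places them in $V_{I+J}^{k+l}(\Delta)$, resp.\ $V_{I+J}^{k+l-1}(\Delta)$), and the containments degenerate correctly when $I+J\notin S_{k+l}(\Delta)$, since the corresponding weight space then vanishes and forces $A_I\wedge A_J=0$. The only cosmetic slip is the weight bookkeeping: in the convention of \eqref{VIk-eqn} the element $\chi^{I+J}\cdot\rho(A_I\wedge A_J)$ has $T$-weight $-(I+J)$ rather than $I+J$, since $V_I^k(\Delta)=V_{-I}^k$; this does not affect the argument.
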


Proposition \ref{Gerst-prop} can be proved in a similar way to Proposition \ref{MR-prop}, which can be find in \cite{M-R 19}. Here we skip the details.

\subsection{BV operators of the Gerstenhaber algebra $(\A_{\Delta}, \wedge, [,])$}
 Let $X=X_{\Delta}$ be a smooth compact toric variety. 
 A natural question is when the Gerstenhaber algebra
$(\A_{\Delta}, \wedge, [,])$ will be exact?
In other words, is there any degree $-1$ $\C$-linear operator 
$D:\A_{\Delta}\rightarrow \A_{\Delta}$ such that
\begin{enumerate}
\item
\begin{equation}\label{BV-eqn1}
[a,b]=(-1)^{|a|}(D(a\wedge b)-D(a)\wedge b-(-1)^{|a|}a\wedge D(b))
\end{equation}
for any $a,b\in \A_{\Delta}$,
\item 
\begin{equation}\label{BV-eqn2}
D^2=0.
\end{equation}
\end{enumerate}
Since $\A_{\Delta}^k=0$ for $k<0$ and $k>n$, we only need to consider the linear maps
$D: \A_{\Delta}^k\rightarrow\A_{\Delta}^{k-1}$ in the case of $1\leq k\leq n$.

\begin{lemma}\label{thm-lem1}
Let $D$ be a BV operator on the Gerstenhaber algebra $(\A_{\Delta}, \wedge, [,])$. 
Then there exists an element $\delta\in M_{\C}$ such that,
 for all $\chi^I\cdot\rho(A)\in V_I^k(\Delta)$ ($1\leq k\leq n$), we have
\begin{equation}
D(\chi^I\cdot\rho(A))=\chi^I\cdot\rho(\imath_{\delta-I}A),
\end{equation}
where $I\in S_k(\Delta)$ and $A\in N_I^k(\Delta)$.
\end{lemma}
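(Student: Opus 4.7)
The idea is to show that $D$ is determined by its action on $V_0^1(\Delta) = \rho(N_\C)$, which gives the element $\delta$, and then to propagate the formula to all of $\A_\Delta$ via the BV identity.

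\textbf{Step 1: $T$-equivariance.} I would first show that $D$ commutes with every Lie derivative $L_y(a) := [\rho(y), a]$ for $y \in N_\C$. Since $D(\rho(y)) \in \A^0_\Delta = \C$, the BV identity with $|\rho(y)| = 1$ reads
\begin{equation*}
[\rho(y), b] = -D(\rho(y) \wedge b) + D(\rho(y)) \cdot b - \rho(y) \wedge D(b).
\end{equation*}
Applying $D$ to both sides and using $D^2 = 0$ yields $D([\rho(y), b]) = D(\rho(y))\, D(b) - D(\rho(y) \wedge D(b))$; applying the same identity to $[\rho(y), D(b)]$ directly yields the same expression, hence $[D, L_y] = 0$. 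By Proposition \ref{Gerst-prop}, $L_y$ acts on $V_I^k(\Delta)$ as multiplication by $\langle y, I \rangle$, so $D$ preserves the weight decomposition: $D(V_I^k(\Delta)) \subseteq V_I^{k-1}(\Delta)$.

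\textbf{Step 2: Defining $\delta$ and a recursion.} Since $V_0^1(\Delta) = \rho(N_\C)$, the restriction $D|_{V_0^1}$ is a linear functional on $N_\C$, and I would define $\delta \in M_\C \cong N_\C^*$ by $D(\rho(y)) = \langle y, \delta \rangle$ for $y \in N_\C$. Applying the BV identity to $a = \chi^I \cdot \rho(A) \in V_I^k(\Delta)$ and $b = \rho(y)$, and using $[\rho(y), \chi^I \rho(A)] = \langle y, I\rangle \chi^I \rho(A)$ from Proposition \ref{Gerst-prop}, I would derive the recursion
\begin{equation*}
D(\chi^I \rho(y \wedge A)) = \langle y, \delta - I \rangle \, \chi^I \rho(A) - \rho(y) \wedge D(\chi^I \rho(A)).
\end{equation*}

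\textbf{Step 3: Induction on $k$.} For $I = 0$, the base case $k = 0$ is $D(1) = 0 = \rho(\imath_\delta 1)$, and the inductive step matches the desired formula via the derivation identity $\imath_\delta(y \wedge A) = \langle y, \delta \rangle A - y \wedge \imath_\delta A$. For $I \in S(\Delta, i)$ with $i \geq 1$, the base is $k = i$: here $V_I^{i-1}(\Delta) = 0$, which forces $D$ to vanish on the one-dimensional space $V_I^i(\Delta) = \chi^I \rho(\wedge^i(F_I^\perp \otimes \C))$. Requiring the formula $D(\chi^I \rho(A_0)) = \chi^I \rho(\imath_{\delta - I} A_0)$ to hold in $\A_\Delta$ then forces $\imath_{\delta - I}(A_0) = 0$ for every $A_0 \in \wedge^i(F_I^\perp \otimes \C)$, equivalently $\delta \in \hat{F}_I(\Delta)$. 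With these compatibilities in hand, the recursion combined with the derivation property of $\imath_{\delta - I}$ completes the induction for $k > i$.

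\textbf{Main obstacle.} The crux is the short computation in Step 1 showing that any BV operator on $\A_\Delta$ is automatically $T$-equivariant (crucially exploiting $D(\rho(y)) \in \C$, which lets it commute with $D$). The other delicate point is the base-case analysis in Step 3: it not only pins down $D$ on $V_I^i(\Delta)$ but simultaneously forces $\delta$ to lie in every $\hat{F}_I(\Delta)$ for $I \in S_\Delta$ with $i \geq 1$. These membership conditions are exactly what ensures $\imath_{\delta - I} A \in N_I^{k-1}(\Delta)$ for all $A \in N_I^k(\Delta)$, so the right-hand side of the formula is a well-defined element of $\A_\Delta$; they also foreshadow the necessary and sufficient condition on $\delta$ in the main theorem.
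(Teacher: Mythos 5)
Your overall strategy is genuinely different from the paper's: you induct \emph{upward} on $k$, starting from the minimal degree $k=i$ of each weight $I\in S(\Delta,i)$, after first establishing $T$-equivariance of $D$ (a clean observation the paper does not make explicit), whereas the paper inducts \emph{downward} from $k=n$. In the paper the base case $k=n$ is free because $\rho(x)\wedge b=0$ for degree reasons, and each descending step solves for $D$ at level $l-1$ from the already-known value $D(\rho(x)\wedge b)$ at level $l$ by cancelling the common factor $\rho(x)\wedge(\,\cdot\,)$ over all $x\in N_{\C}$. Your Steps 1 and 2 are correct as written.

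The gap is in the base case of Step 3. For $I\in S(\Delta,i)$ with $i\geq 1$ you correctly deduce $D(\chi^I\rho(A_0))=0$ from $V_I^{i-1}(\Delta)=0$, but you then write that ``requiring the formula $D(\chi^I\rho(A_0))=\chi^I\rho(\imath_{\delta-I}A_0)$ to hold forces $\imath_{\delta-I}A_0=0$.'' This is circular: the formula is the conclusion of the lemma, and $\delta$ has already been pinned down in Step 2 by $D(\rho(y))=\langle y,\delta\rangle$, so you must \emph{prove} that $\imath_{\delta-I}A_0=0$ for that particular $\delta$; you cannot impose it as a constraint. Without this the base case of your induction is unproved, and it is exactly here that the geometric condition $\delta\in\hat{F}_I(\Delta)$ --- the crux of the entire paper --- enters. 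The gap is fillable inside your own framework: apply your Step 2 recursion with $y\in F_I^{\perp}(\Delta)\otimes\C$, so that $y\wedge A_0=0$ (as $A_0$ spans the top exterior power of $F_I^{\perp}(\Delta)\otimes\C$) and $D(\chi^I\rho(A_0))=0$; the recursion then collapses to $0=\langle y,\delta-I\rangle\,\chi^I\rho(A_0)$, whence $\delta-I\in(F_I^{\perp}(\Delta)\otimes\C)^{\perp}$, and Lemma \ref{liag-lem} gives $\imath_{\delta-I}A_0=0$. The paper sidesteps the issue entirely: its downward induction derives the identity $D(\chi^I\rho(A))=\chi^I\rho(\imath_{\delta-I}A)$ at every level first, and only afterwards, in Lemma \ref{thm-lem2}, extracts $\delta\in\hat{F}_I(\Delta)$ as a consequence of well-definedness.
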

\begin{proof}
\begin{itemize}
\item 
Let us consider the linear map $D: \A_{\Delta}^1\rightarrow \A_{\Delta}^0=\C$.
Let $\delta\in M_{\C}$ be defined by 
\begin{equation}\label{delta-eqn}
\langle\delta, x\rangle=D(\rho(x)),
\end{equation}
for all $x\in N_{\C}$, where $\rho(x)\in W\subseteq \A_{\Delta}^1$.
\item 
First, we prove the lemma in the case of $k=n$.
In Equation \eqref{BV-eqn1}, let $b=\chi^I\cdot\rho(A)$, where 
$I\in S_n(\Delta)=S_{\Delta}$ and $A\in N_I^n(\Delta)=\wedge^n N_{\C}$;
and let $a=\rho(x)$, where $x\in N_{\C}$.
By proposition \ref{Gerst-prop}, we have
\begin{gather*}
[a,b]=[\rho(x),\chi^I\cdot\rho(A)]=\langle I, x\rangle\chi^I\cdot \rho(A),
\end{gather*}
where $\langle I, x\rangle$ is the pairing of 
$I\in S_{\Delta}\subseteq M_{\R}\subseteq M_{\C}$ and $x\in N_{\C}$.
On the right hand of Equation \eqref{BV-eqn1}, 
we have $a\wedge b=0$ since that the complex dimension of $X_{\Delta}$ is $n$, 
and we have $D(a)=D(\rho(x))=\langle\delta, x\rangle$ by Equation \eqref{delta-eqn}.
By Equation \eqref{BV-eqn1}, we have
\begin{gather*}
\langle I, x\rangle\chi^I\cdot \rho(A)=\langle \delta, x\rangle)\chi^I\cdot\rho(A)-\rho(x)\wedge D(\chi^I\cdot\rho(A)),
\end{gather*}
which implies
\begin{equation}\label{D-eqn1}
\langle \delta-I, x\rangle\chi^I\cdot \rho(A)=\rho(x)\wedge D(\chi^I\cdot\rho(A)).
\end{equation}
On the other hand, we have $x\wedge A=0$, which implies
\begin{gather*} 
\imath_{\delta-I}(x\wedge A)=\langle\delta-I, x\rangle A- x\wedge(\imath_{\delta-I} A)=0.
\end{gather*}
Therefore we have $$\langle\delta-I, x\rangle A=x\wedge(\imath_{\delta-I} A).$$
As a consequence, we get
\begin{gather}\label{D-eqn2}
\langle \delta-I, x\rangle\chi^I\cdot \rho(A)=\chi^I\cdot\rho(\langle \delta-I, x\rangle A)
=\chi^I\cdot\rho(x\wedge(\imath_{\delta-I} A))=\chi^I\cdot\rho(x)\wedge\rho(\imath_{\delta-I} A).
\end{gather}
By Equation \eqref{D-eqn1} and Equation \eqref{D-eqn2}, we get
\begin{equation}\label{D-eqn3}
\rho(x)\wedge(D(\chi^I\cdot\rho(A))-\chi^I\cdot\rho(\imath_{\delta-I} A))=0.
\end{equation}
By Proposition \ref{Gerst-prop} and Equation \eqref{D-eqn3}, we have
\begin{equation}
x\wedge \gamma(D(\chi^I\cdot\rho(A))-\chi^I\cdot\rho(\imath_{\delta-I} A))=0
\end{equation}
for all $x\in N_{\C}$. Hence 
$$\gamma(D(\chi^I\cdot\rho(A))-\chi^I\cdot\rho(\imath_{\delta-I} A))=0.$$
As a consequence, we get 
$$D(\chi^I\cdot\rho(A))=\chi^I\cdot\rho(\imath_{\delta-I} A).$$
Therefore, we proved the lemma for $k=n$.

\item
Next we prove the lemma by recursion. Suppose the lemma holds in the case of 
$k=l $ $(l\geq2)$, we will prove it in the case of $k=l-1$.

In Equation \eqref{BV-eqn1}, let $b=\chi^I\cdot\rho(A)$, where 
$I\in S_{l-1}(\Delta)$ and $A\in N_I^{l-1}(\Delta)$; 
and let $a=\rho(x)$, where $x\in N_{\C}$.
By Proposition \ref{Gerst-prop}, we have
\begin{gather*}
[a,b]=[\rho(x),\chi^I\cdot\rho(A)]=\langle I, x\rangle\chi^I\cdot \rho(A).
\end{gather*}
On the right hand of Equation \eqref{BV-eqn1}, we have
\begin{gather*}
D(a\wedge b)=D(\rho(x)\wedge(\chi^I\cdot\rho(A)))=D(\chi^I\cdot\rho(x\wedge A))
=\chi^I\cdot\rho(\imath_{\delta-I}(x\wedge A)),\\
D(a)\wedge b=D(\rho(x))\chi^I\cdot\rho(A)=\langle\delta, x\rangle\chi^I\cdot\rho(A),\\
a\wedge D(b)=\rho(x)\wedge D(\chi^I\cdot \rho(A)).
\end{gather*}
By Equation \eqref{BV-eqn1}, we have
$$\langle I, x\rangle\chi^I\cdot \rho(A)=-\chi^I\cdot\rho(\imath_{\delta-I}(x\wedge A)) 
+\langle\delta, x\rangle\chi^I\cdot\rho(A)-\rho(x)\wedge D(\chi^I\cdot \rho(A)).$$
As a consequence, we have
\begin{gather*}
\rho(x)\wedge D(\chi^I\cdot\rho(A))=\chi^I\cdot(\langle\delta-I,x\rangle\rho(A)
-\rho(\imath_{\delta-I}(x\wedge A)))\\
=\chi^I\cdot\rho(x\wedge\imath_{\delta-I}A)
=\chi^I\cdot\rho(x)\wedge\rho(\imath_{\delta-I}A)=\rho(x)\wedge(\chi^I\cdot\rho(\imath_{\delta-I}A)).
\end{gather*}
Therefore, we have
$$\rho(x)\wedge(D(\chi^I\cdot \rho(A))-\chi^I\cdot\rho(\imath_{\delta-I}A))=0$$
for all $x\in N_{\C}$.
By similar reason as we have shown in the case of $k=n$, we get
$$D(\chi^I\cdot \rho(A))=\chi^I\cdot\rho(\imath_{\delta-I}A).$$
Thus if the lemma holds in the case of $k=l$, then it also holds in the case of $k=l-1$.
\end{itemize}
\end{proof}

To prove Lemma \ref{thm-lem2} and Lemma \ref{thm-lem3}, we need a basic fact in linear algebra, here we write it as a lemma.
\begin{lemma}[A basic fact of linear algebra]\label{liag-lem}
Let $V$ be a $n$-dimensional vector space over $\mathbf{k}$, 
and $F$ be a $i$-dimensional subspace of $V$. 
Let $A$ be a nonzero element in $\wedge^i F\cong\mathbf{k}$. 
Then for any $\alpha\in V^*$, we have that $\imath_{\alpha}A=0$ if and only if 
$\alpha\in F^{\perp}$, where $F^{\perp}\subseteq V^*$ is the annihilator of $F$. 
\end{lemma}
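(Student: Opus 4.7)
The plan is to prove both directions by choosing a basis of $F$ that is compatible with the restriction $\alpha|_F$, and then computing $\imath_\alpha A$ directly via the usual contraction formula.

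First I would handle the easy direction: suppose $\alpha \in F^{\perp}$. Pick any basis $f_1, \ldots, f_i$ of $F$. Since $\wedge^i F \cong \mathbf{k}$ is spanned by $f_1 \wedge \cdots \wedge f_i$, we can write $A = c \cdot f_1 \wedge \cdots \wedge f_i$ for some scalar $c \in \mathbf{k}$. Applying the standard formula
\begin{equation*}
\imath_\alpha A = c \sum_{j=1}^{i} (-1)^{j-1} \alpha(f_j)\, f_1 \wedge \cdots \wedge \widehat{f_j} \wedge \cdots \wedge f_i
\end{equation*}
and using $\alpha(f_j) = 0$ for every $j$ (because $\alpha$ annihilates $F$) immediately yields $\imath_\alpha A = 0$.

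For the converse, I would argue the contrapositive: assume $\alpha \notin F^{\perp}$ and show $\imath_\alpha A \ne 0$. Since $\alpha|_F \ne 0$, there is some $g \in F$ with $\alpha(g) \ne 0$, and after rescaling we may take $\alpha(g) = 1$. Set $f_1 := g$ and extend arbitrarily to a basis $f_1, g_2, \ldots, g_i$ of $F$. Now perform a simple Gaussian elimination: replace each $g_j$ by $f_j := g_j - \alpha(g_j) f_1$ for $2 \le j \le i$. Then $f_1, f_2, \ldots, f_i$ is still a basis of $F$, and by construction $\alpha(f_1) = 1$ while $\alpha(f_j) = 0$ for $j \ge 2$. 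Writing $A = c \cdot f_1 \wedge \cdots \wedge f_i$ with $c \ne 0$ (since $A$ is nonzero and $\wedge^i F$ is one-dimensional), the contraction formula collapses to
\begin{equation*}
\imath_\alpha A = c \cdot f_2 \wedge \cdots \wedge f_i,
\end{equation*}
which is nonzero because $f_2, \ldots, f_i$ are linearly independent. This contradicts $\imath_\alpha A = 0$, completing the proof.

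There is really no major obstacle here; the statement is a standard fact and the only substantive point is the choice of an adapted basis in the second direction. The adjustment $f_j \mapsto g_j - \alpha(g_j)f_1$ is the one trick needed to isolate the contribution of $f_1$ inside the contraction, and once the basis is in place the conclusion follows from the explicit contraction formula.
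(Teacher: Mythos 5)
Your proof is correct and complete. The paper itself states this lemma without proof (it is labelled ``a basic fact of linear algebra''), so there is no argument to compare against; your adapted-basis computation --- annihilating $\alpha$ on $f_2,\ldots,f_i$ via $f_j:=g_j-\alpha(g_j)f_1$ so that the contraction formula collapses to $c\, f_2\wedge\cdots\wedge f_i\neq 0$ --- is exactly the standard way to fill this in, and both directions check out (note that in the contrapositive direction $\alpha\notin F^{\perp}$ forces $i\geq 1$, so the construction of $f_1$ is always available, and for $i=1$ the empty wedge $f_2\wedge\cdots\wedge f_i$ is the nonzero scalar $1$).
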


\begin{lemma}\label{thm-lem2}
Let $\delta$ be an element in $M_{\C}$. Suppose that for
all $\chi^I\cdot\rho(A)\in V_I^k(\Delta)$ $(1\leq k\leq n)$, 
we have $\chi^I\cdot\rho(\imath_{\delta-I}A)\in V_I^{k-1}(\Delta)$,
where $I\in S_k(\Delta)$ and $A\in N_I^k(\Delta)$.
\begin{enumerate}
 \item
 We have
\begin{equation}\label{thm-lem2-eqn1}
\delta\in\bigcap_{I\in S_{\Delta}}\hat{F}_I(\Delta),
\end{equation}
\item
If $I\in M$ is a vertex of $P_{\Delta}$, then we have $\delta=I$.
Thus the polytope $P_{\Delta}\subset M_{\R}$ has at most one vertex belonging to $M$.
\end{enumerate}
\end{lemma}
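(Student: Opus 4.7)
The plan is to peel back the definitions: for each $I \in S_{\Delta}$, the assertion $\delta \in \hat{F}_I(\Delta)$ translates via the description of $\hat{F}_I(\Delta)$ into the statement $\delta - I \in (F_I^{\perp}(\Delta) \otimes \C)^{\perp}$. So I would look for the single choice of $k$ and $A \in N_I^k(\Delta)$ that forces this condition.

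Fix $I \in S_{\Delta}$ and let $i$ be the unique integer with $I \in S(\Delta, i)$; then $\dim F_I^{\perp}(\Delta) = i$. The case $i = 0$ is vacuous since $F_I^{\perp}(\Delta) = 0$ and $\hat{F}_I(\Delta) = M_{\C}$. For $i \geq 1$, the idea is to apply the hypothesis with $k = i$. Since $N_I^i(\Delta) = \wedge^i(F_I^{\perp}(\Delta) \otimes \C)$ is one dimensional, I pick a nonzero $A$ in it. The key observation is that $I \in S(\Delta,i)$ with $i \geq 1$ implies $I \notin S_{i-1}(\Delta)$ because the sets $S(\Delta, j)$ are disjoint, hence $N_I^{i-1}(\Delta) = 0$ and therefore $V_I^{i-1}(\Delta) = 0$. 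The hypothesis then gives $\chi^I \cdot \rho(\imath_{\delta - I} A) = 0$, and by injectivity of $\rho: \wedge^{i-1} N_{\C} \to W^{i-1}$ we conclude $\imath_{\delta - I} A = 0$.

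Now I would apply Lemma \ref{liag-lem} with $V = N_{\C}$, $F = F_I^{\perp}(\Delta) \otimes \C$, and $\alpha = \delta - I \in M_{\C} = N_{\C}^*$. Because $A$ is a nonzero element of $\wedge^i F$, the lemma yields $\delta - I \in (F_I^{\perp}(\Delta) \otimes \C)^{\perp}$, i.e., $\delta \in \hat{F}_I(\Delta)$. Since $I$ was arbitrary in $S_{\Delta}$, this proves $\delta \in \bigcap_{I \in S_{\Delta}} \hat{F}_I(\Delta)$, establishing (1).

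For (2), if $I \in M$ is a vertex of $P_{\Delta}$ then $\dim F_I(\Delta) = 0$, so $F_I^{\perp}(\Delta) \otimes \C = N_{\C}$ and $\hat{F}_I(\Delta) = \{I\}$ as noted in item (4) of the preliminaries. Part (1) then forces $\delta = I$. If two vertices $I, I' \in M$ of $P_{\Delta}$ both existed, the same argument would give $\delta = I$ and $\delta = I'$, hence $I = I'$. I do not anticipate a real obstacle: once one notices that taking $k$ equal to the codimension $i$ of the face through $I$ makes the target space $V_I^{i-1}(\Delta)$ vanish, everything reduces to the linear-algebra lemma. The only subtlety is being careful that for $i \geq 1$ the sets $S(\Delta, j)$ are pairwise disjoint so that $I \notin S_{i-1}(\Delta)$.
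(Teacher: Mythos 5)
Your proposal is correct and follows essentially the same route as the paper: take $k=i$ where $I\in S(\Delta,i)$, use that $N_I^{i-1}(\Delta)=0$ (hence $V_I^{i-1}(\Delta)=0$) to force $\imath_{\delta-I}A=0$ for a nonzero $A\in\wedge^i(F_I^{\perp}(\Delta)\otimes\C)$, and then invoke Lemma \ref{liag-lem}. Your explicit treatment of the $i=0$ case and of the injectivity of $\rho$ is slightly more careful than the paper's, but the argument is the same.
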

\begin{proof}
\begin{enumerate}
\item
We will prove that for any $I\in S_{\Delta}$, we have $\delta\in \hat{F}_I(\Delta)$.

Suppose that $I\in S(\Delta, i)$. Then we have $\dim F_I(\Delta)=n-i$ 
 and $\dim F_I^{\perp}(\Delta)=i$. 
According to the assumption, for any $\chi^I\cdot\rho(A)\in V_I^k(\Delta)$, 
we have $\chi^I\cdot\rho(\imath_{\delta-I}A)\in V_I^{k-1}(\Delta)$.
Hence for any $A\in N_I^k(\Delta)$, we have $\imath_{\delta-I}A\in N_I^{k-1}(\Delta)$.
In the case of $k=i$, we have $N_I^{i-1}(\Delta)=0$.
Let $A$ be a nonzero element in 
$N_I^i(\Delta)=\wedge^i (F_I^{\perp}(\Delta)\otimes\C)\cong\C$.
Then we have
\begin{equation}\label{thm-lem2-eqn2}
\imath_{\delta-I}A=0.
\end{equation}
By Lemma \ref{liag-lem} and Equation \eqref{thm-lem2-eqn2}, we have
\begin{equation}\label{thm-lem2-eqn3}
\delta-I\in (F_I^{\perp}(\Delta)\otimes\C)^{\perp}.
\end{equation}
As a consequence, we get $\delta\in \hat{F}_I(\Delta)$ for all $I\in S(\Delta,i)$.

Since $S_\Delta=\bigcup_{0\leq i\leq n} S(\Delta, i)$, we get
$$\delta\in\bigcap_{I\in S_{\Delta}}\hat{F}_I(\Delta).$$ 
\item
Suppose that  $I\in M$ is a vertex of the polytope $P_{\Delta}$.
Then we have $\delta\in S_{\Delta}$ and $\hat{F}_I(\Delta)=\{I\}$.
Since 
$$\delta\in\bigcap_{I\in S_{\Delta}}\hat{F}_I(\Delta),$$
we have $\delta=I$.
Thus the polytope $P_{\Delta}\subset M_{\C}$ has at most one vertex belonging to $M$.
\end{enumerate}
\end{proof}

\begin{lemma}\label{thm-lem3}
Let $\delta\in\bigcap_{I\in S_{\Delta}}\hat{F}_I(\Delta)$. 
Then for all $\chi^I\cdot\rho(A)\in V_I^k(\Delta)$ $(1\leq k\leq n)$, 
we have $\chi^I\cdot\rho(\imath_{\delta-I}A)\in V_I^{k-1}(\Delta)$,
where $I\in S_k(\Delta)$ and $A\in N_I^k(\Delta)$.
\end{lemma}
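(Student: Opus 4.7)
The plan is to reduce the statement to the purely algebraic claim that $\imath_{\delta-I}A \in N_I^{k-1}(\Delta)$ for every $A \in N_I^k(\Delta)$: once this is established, applying $\rho$ and multiplying by $\chi^I$ produces an element of $\chi^I\cdot W_I^{k-1}(\Delta) = V_I^{k-1}(\Delta)$. Fix $I \in S_k(\Delta)$ and let $i$ be the unique integer with $I \in S(\Delta,i)$, so $0 \leq i \leq k$ and $\dim(F_I^\perp(\Delta)\otimes\C) = i$. The hypothesis $\delta \in \hat{F}_I(\Delta)$ unpacks, via the formula for $\hat{F}_I(\Delta)$ given in the preliminaries, as $\delta - I \in (F_I^\perp(\Delta)\otimes\C)^\perp$.

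For $0 \leq i < k$, I would verify the containment on decomposable generators $A = B\wedge C$ with $B \in \wedge^i(F_I^\perp(\Delta)\otimes\C)$ and $C \in \wedge^{k-i}N_\C$, using the graded Leibniz rule
\begin{equation*}
\imath_{\delta-I}(B\wedge C) = (\imath_{\delta-I}B)\wedge C + (-1)^i B\wedge \imath_{\delta-I}C.
\end{equation*}
Since $B$ is a top-degree element of the $i$-dimensional subspace $F_I^\perp(\Delta)\otimes\C \subseteq N_\C$, Lemma \ref{liag-lem} combined with $\delta - I \in (F_I^\perp(\Delta)\otimes\C)^\perp$ forces $\imath_{\delta-I}B = 0$ (trivially so when $i=0$). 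The surviving term lies in $\wedge^i(F_I^\perp(\Delta)\otimes\C)\wedge \wedge^{k-i-1}N_\C$, which is exactly $N_I^{k-1}(\Delta)$ by definition. The residual case $i = k$ must be handled separately: here $N_I^k(\Delta) = \wedge^k(F_I^\perp(\Delta)\otimes\C)$ is one-dimensional and $N_I^{k-1}(\Delta) = 0$, so it suffices to show $\imath_{\delta-I}A = 0$, which is again a direct application of Lemma \ref{liag-lem}.

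Viewed this way, the argument is the converse of Lemma \ref{thm-lem2}: whereas that lemma used the vanishing $\imath_{\delta-I}A = 0$ for a top-degree $A \in \wedge^i(F_I^\perp(\Delta)\otimes\C)$ to extract the annihilator condition on $\delta - I$, here the annihilator condition is fed back in to force the vanishing. I do not anticipate a real obstacle; the only mildly delicate point is the bookkeeping at the boundary case $i = k$, where $N_I^{k-1}(\Delta) = 0$ and the Leibniz-rule argument (which tacitly presumes $k-i-1 \geq 0$) is inapplicable and must be replaced by a direct vanishing argument.
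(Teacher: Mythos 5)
Your proposal is correct and follows essentially the same route as the paper's proof: reduce to the algebraic claim $\imath_{\delta-I}A\in N_I^{k-1}(\Delta)$, handle the top-degree case $i=k$ by applying Lemma \ref{liag-lem} to get $\imath_{\delta-I}A=0$ (matching $N_I^{k-1}(\Delta)=0$), and handle $i<k$ by the graded Leibniz rule on $A=A_1\wedge A_2$ with the first term killed by the same lemma. No substantive differences.
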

\begin{proof}
To prove the lemma, it is enough for us to prove that $\imath_{\delta-I}A\in N_I^{k-1}(\Delta)$ for all $I\in S_k(\Delta)$ and $A\in N_I^k(\Delta)$. 
Suppose that $I\in S(\Delta, i)$. Then we have $k\leq i$ since $I\in S_k(\Delta)$.

\begin{itemize}
\item
We first prove the lemma in the case of $k=i$. In the case of $k=i$, we have 
\begin{gather*}
N_I^i(\Delta)=
\wedge^i (F_I^{\perp}(\Delta)\otimes\C)\cong\C \quad\text{and}\quad N_I^{i-1}(\Delta)=0 .
\end{gather*}
Suppose that $A_1$ is a nonzero element in $N_I^i(\Delta)=
\wedge^i (F_I^{\perp}(\Delta)\otimes\C)\cong\C$.
Since $\delta\in \hat{F}_I(\Delta)$, we have $\delta-I\in (F_I^{\perp}(\Delta)\otimes\C)^{\perp}$.
By Lemma \ref{liag-lem}, we get $$\imath_{\delta-I}A_1=0.$$
Since $N_I^i(\Delta)=\C A_1$, we have proved the lemma in the case of $k=i$.
\item
In the case of $k>i$, we have 
$$N_I^k(\Delta)=N_I^i(\Delta)\wedge(\wedge^{k-i}N_{\C}).$$
Any element  $A\in N_I^k(\Delta)$ can be written as 
$$A=A_1\wedge A_2,$$
where $A_2\in \wedge^{k-i} N_{\C}$.
We have
\begin{equation}
\imath_{\delta-I}A=
(\imath_{\delta-I}A_1)\wedge A_2+(-1)^i A_1\wedge(\imath_{\delta-I}A_2)
=(-1)^i A_1\wedge(\imath_{\delta-I}A_2).
\end{equation}
Since $A_1\in N_I^i(\Delta)$ and $\imath_{\delta-I}A_2\in\wedge^{k-i-1}N_{\C}$,
we have $$\imath_{\delta-I}A\in N_I^{k-1}(\Delta).$$
\end{itemize}
\end{proof}

\begin{lemma}\label{thm-lem4}
Let $\delta$ be an element in $M_{\C}$. Let $D$ be a linear operator defined on the Gerstenhaber algebra 
$(\A_{\Delta}, \wedge, [,])$ satisfying
\begin{equation*}
D(\chi^I\cdot\rho(A))=\chi^I\cdot\rho(\imath_{\delta-I}A)
\end{equation*}
for all $\chi^I\cdot\rho(A)\in V_I^k(\Delta)$ $(1\leq k\leq n)$, 
where $I\in S_k(\Delta)$ and $A\in N_I^k(\Delta)$.
Then $D$ satisfies Equation \eqref{BV-eqn1} and \eqref{BV-eqn2}.
\end{lemma}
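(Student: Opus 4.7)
The plan is to reduce the verification to the already-established BV structure on the larger Gerstenhaber algebra $(\A_T, \wedge, [,])$ of Proposition \ref{MR-prop}, using the embedding of Gerstenhaber algebras $\gamma: \A_\Delta \hookrightarrow \A_T$ provided by Proposition \ref{Gerst-prop}. First I would extend $D$ to a $\C$-linear operator $\tilde D: \A_T \to \A_T$ by setting $\tilde D(\chi^I \otimes A) = -\chi^I \otimes \imath_{I-\delta} A$ for all $I \in M$ and $A \in \wedge^k N_\C$, so that $\gamma \circ D = \tilde D \circ \gamma$ by construction; the task is then to show that $\tilde D$ is a BV operator on $\A_T$ and to use that $\A_\Delta$ sits inside $\A_T$ as a sub-Gerstenhaber algebra.

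Next I would compare $\tilde D$ with the BV operator $D_T$ from Proposition \ref{MR-prop}. A direct calculation gives $(\tilde D - D_T)(\chi^I \otimes A) = \chi^I \otimes \imath_\delta A$, and I denote this operator by $E_\delta$. Since contraction with the fixed element $\delta \in M_\C$ is a degree $-1$ graded derivation of $\wedge^\bullet N_\C$, and the multiplication on $\A_T$ only wedges the $N_\C$-factors, $E_\delta$ is a degree $-1$ graded derivation of $(\A_T, \wedge)$. Because adding a graded derivation of degree $-1$ to any $\C$-linear operator leaves its associated derived bracket unchanged, the derived bracket of $\tilde D = D_T + E_\delta$ equals the derived bracket of $D_T$, which by Proposition \ref{MR-prop} is precisely the Gerstenhaber bracket on $\A_T$. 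This yields Equation \eqref{BV-eqn1} for $\tilde D$.

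For Equation \eqref{BV-eqn2}, a one-line computation gives $\tilde D^2(\chi^I \otimes A) = \chi^I \otimes \imath_{I-\delta}^2 A = 0$, since interior product with a fixed covector squares to zero. The hypothesis that $D$ is a well-defined operator on $\A_\Delta$ (equivalently, by Lemma \ref{thm-lem3}, the condition $\delta \in \bigcap_{I \in S_\Delta} \hat F_I(\Delta)$) exactly says that $\tilde D$ preserves $\gamma(\A_\Delta)$. Restricting $\tilde D$ to the sub-Gerstenhaber algebra $\A_\Delta$ then yields both \eqref{BV-eqn1} and \eqref{BV-eqn2} for $D$, as required.

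There is no substantial conceptual obstacle: the key observation is that passing from $D_T$ to $\tilde D$ amounts to adding a derivation, which preserves the induced bracket, while the nilpotency of $\tilde D$ is immediate. An alternative route is a completely direct calculation on $\A_\Delta$ using only Proposition \ref{Gerst-prop}, where \eqref{BV-eqn1} reduces to the telescoping identity $\imath_{\delta - I - J} - \imath_{\delta - I} = -\imath_J$ combined with the graded Leibniz rule for $\imath_{\delta - I - J}$ on $A \wedge B$, and \eqref{BV-eqn2} again follows from $\imath_{\delta - I}^2 = 0$.
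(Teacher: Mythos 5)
Your proposal is correct, but it takes a genuinely different route from the paper. The paper proves Lemma \ref{thm-lem4} by direct computation inside $\A_{\Delta}$: it expands $[a,b]$, $D(a\wedge b)$, $D(a)\wedge b$ and $a\wedge D(b)$ via the explicit formulas of Proposition \ref{Gerst-prop}, and then verifies \eqref{BV-eqn1} using the three contraction identities for $\imath_{\delta}$, $\imath_{-I}$, $\imath_{-J}$ on $A_I\wedge A_J$ (essentially the ``telescoping plus graded Leibniz'' argument you sketch as your alternative route), while $D^2=0$ is handled by a small case analysis on $k$ versus $i$. You instead push everything into $\A_T$ via the embedding $\gamma$ of Proposition \ref{Gerst-prop}, write $\tilde D=D_T+E_{\delta}$ with $E_{\delta}(\chi^I\otimes A)=\chi^I\otimes\imath_{\delta}A$, and observe that $E_{\delta}$ is a degree $-1$ graded derivation of $(\A_T,\wedge)$, hence does not change the derived bracket; nilpotency is immediate from $\imath_{I-\delta}^2=0$, and the hypothesis that $D$ preserves $\A_{\Delta}$ lets you restrict. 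All the steps check out: the sign in $E_{\delta}(ab)=E_{\delta}(a)b+(-1)^{|a|}aE_{\delta}(b)$ matches the sign in \eqref{BV-eqn1}, so the derived brackets of $\tilde D$ and $D_T$ agree, and injectivity of the Gerstenhaber morphism $\gamma$ transports both identities back to $\A_{\Delta}$. What your route buys is conceptual economy --- the whole lemma becomes ``a BV operator plus a derivation is again a generator of the same bracket'' --- and a cleaner proof of $D^2=0$ than the paper's case split; what it costs is an explicit reliance on Proposition \ref{MR-prop}(2) (imported from \cite{M-R 19}, with the sign caveat noted in the paper's remark) and on the full strength of Proposition \ref{Gerst-prop}(2), whereas the paper's self-contained computation only uses the multiplication and bracket formulas of Proposition \ref{Gerst-prop}(1).
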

\begin{proof}
\begin{itemize}
\item
Since that $\A_{\Delta}^k=0$ for $k<0$ and $k>n$, we only need to verify that 
$D^2:\A_{\Delta}^k\rightarrow\A_{\Delta}^{k-2}$ equal to zero for $2\leq k\leq n$.

As $D$ is a linear map from the vector space
 $\A_{\Delta}^k=\bigoplus_{I\in S_k(\Delta)}V_I^k(\Delta)$ to
the vector space $\A_{\Delta}^{k-1}=\bigoplus_{I\in S_{k-1}(\Delta)}V_I^{k-1}(\Delta)$,
we get that
\begin{gather*}
D(\chi^I\cdot\rho(A))=\chi^I\cdot\rho(\imath_{\delta-I}A)\in V_I^{k-1}(\Delta)
\end{gather*}
for all $\chi^I\cdot\rho(A)\in V_I^k(\Delta)$ $(1\leq k\leq n)$.
For any $\chi^I\cdot\rho(A)\in V_I^k(\Delta)$, suppose that $I\in S(\Delta, i)$, 
then we have $i\leq k$ since $I\in S_k(\Delta)$.

In the case of $k<i+2$, since $D^2(\chi^I\cdot\rho(A))\in V_I^{k-2}(\Delta)$ and $V_I^{k-2}(\Delta)=0$, we get that $D^2(\chi^I\cdot\rho(A))=0$. 
And in the case of $k\geq i+2$, we have
\begin{gather*}
D^2(\chi^I\cdot\rho(A))=D(\chi^I\cdot\rho(\imath_{\delta-I}A))
=\chi^I\cdot\rho(\imath_{\delta-I}(\imath_{\delta-I}A))=0.
\end{gather*}
Since  $\A_{\Delta}^k=\bigoplus_{I\in S_k(\Delta)}V_I^k(\Delta)$,
we get $D^2=0$.

\item
If $a\in \A^0_{\Delta}=\C$ or $b\in \A^0_{\Delta}=\C$, Equation \eqref{BV-eqn2} holds obviously.

Next we will prove that Equation  \eqref{BV-eqn2} holds for any $a\in V_I^k(\Delta)$ 
and $b\in V_J^l(\Delta)$, where $1\leq k, l\leq n$, $I\in S_k(\Delta)$ and 
$J\in S_l(\Delta)$.

Suppose that $a=\chi^I\cdot\rho(A_I)$ and $b=\chi^J\cdot\rho(A_J)$, where $A_I\in N_I^k(\Delta)$ and $A_J\in N_J^l(\Delta)$.
By Proposition \ref{Gerst-prop}, we have
\begin{gather}\label{BVeqn-lem-eqn1}
[a,b]=[\chi^I\cdot\rho(A_I), \chi^J\cdot\rho(A_J)]=(-1)^{k+1}\chi^{I+J}\cdot
\rho((\imath_{J} A_I)\wedge  A_J+(-1)^{k}A_I\wedge(\imath_I A_J)).
\end{gather}

On the other hand, we have
\begin{equation}\label{BVeqn-lem-eqn2}
D(a\wedge b)=D((\chi^I\cdot\rho(A_I))\wedge(\chi^J\cdot\rho(A_J)))=D(\chi^{I+J}\cdot\rho(A_I\wedge A_J))=\chi^{I+J}\cdot\rho(\imath_{\delta-I-J(}A_I\wedge A_J)).
\end{equation}
We notice that Equation \eqref{BVeqn-lem-eqn2} holds even in the case of $k+l>n$ since that $a\wedge b=0$ and $A_I\wedge A_j=0$ if $k+l>n$.
Moreover, we have
\begin{equation}\label{BVeqn-lem-eqn3}
D(a)\wedge b=D(\chi^I\cdot\rho(A_I))\wedge(\chi^J\cdot\rho(A_J))
=(\chi^I\cdot\rho(\imath_{\delta-I}A_I))\wedge(\chi^J\cdot\rho(A_J))=
\chi^{I+J}\cdot\rho((\imath_{\delta-I}A_I)\wedge A_J)
\end{equation}
and
\begin{equation}\label{BVeqn-lem-eqn4}
a\wedge D(b)=\chi^I\cdot\rho(A_I)\wedge D(\chi^J\cdot\rho(A_J))
=(\chi^I\cdot\rho(A_I))\wedge(\chi^J\cdot\rho(\imath_{\delta-J}A_J))
=\chi^{I+J}\cdot\rho(A_I\wedge(\imath_{\delta-J}A_J)).
\end{equation}

By Equation \eqref{BVeqn-lem-eqn2}, Equation \eqref{BVeqn-lem-eqn3} and Equation \eqref{BVeqn-lem-eqn4}, we get that
\begin{gather*}
D(a\wedge b)-D(a)\wedge b-(-1)^k a\wedge D(b)\\
=\chi^{I+J}\cdot(\rho(\imath_{\delta-I-J(}A_I\wedge A_J))-\rho((\imath_{\delta-I}A_I)\wedge A_J)-(-1)^k\rho(A_I\wedge(\imath_{\delta-J}A_J)))\\
=\chi^{I+J}\cdot\rho(\imath_{\delta-I-J}(A_I\wedge A_J)-(\imath_{\delta-I}A_I)\wedge A_J-(-1)^k A_I\wedge(\imath_{\delta-J}A_J)).
\end{gather*}
Since that
\begin{gather*}
\imath_{\delta}(A_I\wedge A_J)-\imath_{\delta} A_I\wedge A_J-(-1)^k A_I\wedge \imath_{\delta} A_J=0,\\
\imath_{-I}(A_I\wedge A_J)-(\imath_{-I} A_I)\wedge A_J
=(-1)^k A_I\wedge(\imath_{-I}A_J),\\
\imath_{-J}(A_I\wedge A_J)-(-1)^k A_I\wedge(\imath_{-J}A_J)
=\imath_{-J}A_I\wedge A_J,
\end{gather*}
we get
\begin{equation}\label{BVeqn-lem-eqn5}
D(a\wedge b)-D(a)\wedge b-(-1)^k a\wedge D(b)
=\chi^{I+J}\cdot\rho((-1)^k A_I\wedge(\imath_{-I}A_J)+\imath_{-J}A_I\wedge A_J).
\end{equation}
By Equation \eqref{BVeqn-lem-eqn2} and Equation \eqref{BVeqn-lem-eqn5}, we get
\begin{equation*}
[a,b]=(-1)^k(D(a\wedge b)-D(a)\wedge b-(-1)^k a\wedge D(b)).
\end{equation*}
\end{itemize}
\end{proof}

\begin{theorem}
Let $X_{\Delta}$ be a smooth compact toric variety.  
Let $(\A_{\Delta}, \wedge, [,])$ be the Gerstenhaber algebra associated to $X_{\Delta}$.
\begin{enumerate}
\item
The necessary and sufficient condition for the existence of the BV operator is that
\begin{equation}\label{thm-eqn1}
\bigcap_{I\in S_{\Delta}}\hat{F}_I(\Delta)\neq\emptyset.
\end{equation}
\item
If Equation \eqref{thm-eqn1} holds, 
choosing any element $\delta\in\bigcap_{I\in S_{\Delta}}\hat{F}_I(\Delta)$, 
we can define a BV-operator $D: \A_{\Delta}\rightarrow\A_{\Delta}$ by
\begin{equation}\label{thm-eqn2}
D(\chi^I\cdot\rho(A))=\chi^I\cdot\rho(\imath_{\delta-I}A),
\end{equation}
where $\chi^I\cdot\rho(A)\in V_I^k(\Delta)$ $(1\leq k\leq n)$, $I\in S_k(\Delta)$ and $A\in N_I^k(\Delta)$.
\item
All the BV operators of the Gerstenhaber algebra $(\A_{\Delta}, \wedge, [,])$ 
are in the form as  shown in Equation \eqref{thm-eqn2}
\end{enumerate}
\end{theorem}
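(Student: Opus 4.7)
The strategy is simply to assemble Lemmas \ref{thm-lem1}--\ref{thm-lem4}, which together cover each of the three parts.

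For the \emph{necessity} in part (1), I would start from an arbitrary BV operator $D$ on $(\A_\Delta,\wedge,[,])$. Lemma \ref{thm-lem1} immediately forces $D$ to take the form
\[
D(\chi^I\cdot\rho(A))=\chi^I\cdot\rho(\imath_{\delta-I}A)
\]
for some $\delta\in M_\C$. Because $D$ maps $\A_\Delta^k$ into $\A_\Delta^{k-1}=\bigoplus_{I\in S_{k-1}(\Delta)}V_I^{k-1}(\Delta)$, the element $\chi^I\cdot\rho(\imath_{\delta-I}A)$ must in fact lie in $V_I^{k-1}(\Delta)$. This is exactly the hypothesis of Lemma \ref{thm-lem2}, which then yields $\delta\in\bigcap_{I\in S_\Delta}\hat F_I(\Delta)$; in particular this intersection is non-empty.

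For the \emph{sufficiency} in part (1) together with part (2), I would pick any $\delta\in\bigcap_{I\in S_\Delta}\hat F_I(\Delta)$ (non-empty by assumption) and define $D$ by the formula \eqref{thm-eqn2}, declaring $D$ to be zero on $\A_\Delta^0$. Lemma \ref{thm-lem3} shows that this formula does land in $V_I^{k-1}(\Delta)\subseteq\A_\Delta^{k-1}$, so $D$ is a well-defined degree $-1$ $\C$-linear operator $\A_\Delta\to\A_\Delta$. Lemma \ref{thm-lem4} then verifies both the BV relation \eqref{BV-eqn1} and the nilpotency $D^2=0$, so $D$ is a BV operator.

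Part (3) is now automatic: any BV operator has the prescribed form with \emph{some} $\delta\in M_\C$ by Lemma \ref{thm-lem1}, and the discussion above shows that this $\delta$ must lie in $\bigcap_{I\in S_\Delta}\hat F_I(\Delta)$, so $D$ is of the shape \eqref{thm-eqn2}. There is no genuine obstacle at this stage; all the substantive content has been absorbed into the four lemmas, and the only bookkeeping needed is to remark that the restrictions $D|_{\A_\Delta^0}=0$ and $D|_{\A_\Delta^{k}}=0$ for $k<0$ or $k>n$ are consistent with \eqref{thm-eqn2} (the case $k=0$ follows because there is no nontrivial contraction out of $\wedge^0 N_\C$, and Lemma \ref{thm-lem4} was explicitly designed to handle the boundary cases $k=0$ and $k+l>n$).
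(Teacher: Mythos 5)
Your proposal is correct and follows essentially the same route as the paper: Lemma \ref{thm-lem1} for the form of $D$, Lemma \ref{thm-lem2} for necessity, Lemmas \ref{thm-lem3} and \ref{thm-lem4} for sufficiency, and Lemma \ref{thm-lem1} again for part (3). In fact your citation of Lemma \ref{thm-lem2} in the necessity step is the right one --- the paper's text points to Lemma \ref{thm-lem3} at that point, which is evidently a typo, since it is Lemma \ref{thm-lem2} that deduces $\delta\in\bigcap_{I\in S_{\Delta}}\hat{F}_I(\Delta)$ from the condition $\chi^I\cdot\rho(\imath_{\delta-I}A)\in V_I^{k-1}(\Delta)$.
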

\begin{proof}
\begin{enumerate}
\item
\begin{itemize}
\item The necessary condition:\\
Suppose that $D: A_{\Delta}\rightarrow A_{\Delta}$ is a BV-operator of 
Gerstenhaber algebra $(\A_{\Delta}, \wedge, [,])$. 
By Lemma \ref{thm-lem1}, there exist an element $\delta\in M_{\C}$ such that,
 for all $\chi^I\cdot\rho(A)\in V_I^k(\Delta)$ ($1\leq k\leq n$), we have
\begin{equation*}
D(\chi^I\cdot\rho(A))=\chi^I\cdot\rho(\imath_{\delta-I}A),
\end{equation*}
where $I\in S_k(\Delta)$ and $A\in N_I^k(\Delta)$.
By Theorem \ref{polyvector-thm}, to make $D: A_{\Delta}\rightarrow A_{\Delta}$ to be a well defined operator, we have
\begin{equation*}
D(\chi^I\cdot\rho(A))=\chi^I\cdot\rho(\imath_{\delta-I}A)\in V_I^{k-1}(\Delta)
\end{equation*}
for all $I\in S_k(\Delta)$ and $A\in N_I^k(\Delta)$.
And by Lemma \ref{thm-lem3}, we get
\begin{equation*}
\delta\in \bigcap_{I\in S_{\Delta}}\hat{F}_I(\Delta).
\end{equation*}
Hence we have
$$\bigcap_{I\in S_{\Delta}}\hat{F}_I(\Delta)\neq\emptyset.$$
\item The sufficient condition:\\
If Equation \eqref{thm-eqn1} holds, 
choosing any element $\delta\in\bigcap_{I\in S_{\Delta}}\hat{F}_I(\Delta)$, 
we can define a $\C$-linear operator $D: \A_{\Delta}\rightarrow\A_{\Delta}$ by
\begin{equation*}
D(\chi^I\cdot\rho(A))=\chi^I\cdot\rho(\imath_{\delta-I}A),
\end{equation*}
where $\chi^I\cdot\rho(A)\in V_I^k(\Delta)$ $(1\leq k\leq n)$, 
$I\in S_k(\Delta)$ and $A\in N_I^k(\Delta)$. By Lemma \ref{thm-lem3}, 
$D: A_{\Delta}\rightarrow A_{\Delta}$ is a well defined operator since 
$D(V_I^k(\Delta))\subseteq V_I^{k-1}(\Delta)$.
By Lemma \ref{thm-lem4}, $D$ is a BV-operator of the Gerstenhaber algebra 
$(\A_{\Delta}, \wedge, [,])$.
\end{itemize}
\item It is already proved above in the sufficient condition part.
\item It follows directly from Lemma \ref{thm-lem1}.
\end{enumerate}
\end{proof}

\begin{bibdiv}
\begin{biblist}

\bib{B-K 98}{article}{
   author={Barannikov, Sergey},
   author={Kontsevich, Maxim},
   title={Frobenius manifolds and formality of Lie algebras of polyvector fields},
   journal={Internat. Math. Res. Notices},
   date={1998},
   number={4},
   pages={201--215},
   issn={1073-7928},
   doi={10.1155/S1073792898000166},
}

\bib{B-S-Xu 19}{article}{
   author={Bandiera,Ruggero },
   author={Sti\'{e}non, Mathieu},
   author={Xu, Ping},
   title={Polyvector fields and polydifferential operators associated with Lie pairs},
   journal={arXiv 1901.04602},
   date={2019},
}

\bib{Cox}{article}{
   author={Cox, David},
   title={What is a toric variety?},
   conference={
      title={Topics in algebraic geometry and geometric modeling},
   },
   book={
      series={Contemp. Math.},
      volume={334},
      publisher={Amer. Math. Soc., Providence, RI},
   },
   date={2003},
   pages={203--223},
   doi={10.1090/conm/334/05983},
}

%\bib{D-R-W 15}{article}{
 %  author={Dolgushev, V. A.},
 %  author={Rogers, C. L.},
 %  author={Willwacher, T. H.},
 %  title={Kontsevich's graph complex, GRT, and the deformation complex of
 %  the sheaf of polyvector fields},
 %  journal={Ann. of Math. (2)},
%   volume={182},
  % date={2015},
 %  number={3},
  % pages={855-943},
  % issn={0003-486X},
%   doi={10.4007/annals.2015.182.3.2},
%}

\bib{Fulton}{book}{
   author={Fulton, William},
   title={Introduction to toric varieties},
   series={Annals of Mathematics Studies},
   volume={131},
   note={The William H. Roever Lectures in Geometry},
   publisher={Princeton University Press, Princeton, NJ},
   date={1993},
   pages={xii+157},
   isbn={0-691-00049-2},
   review={\MR{1234037}},
   doi={10.1515/9781400882526},
}

%\bib{Gualtieri 11}{article}{
 %  author={Gualtieri, Marco},
%   title={Generalized complex geometry},
 %  journal={Ann. of Math. (2)},
 %  volume={174},
 %  date={2011},
  % number={1},
 %  pages={75--123},
 %  issn={0003-486X},
  % review={\MR{2811595}},
 %  doi={10.4007/annals.2011.174.1.3},
%}

\bib{Hitchin 11}{article}{
   author={Hitchin, Nigel},
   title={Stable bundles and polyvector fields},
   conference={
      title={Complex and differential geometry},
   },
   book={
      series={Springer Proc. Math.},
      volume={8},
      publisher={Springer, Heidelberg},
   },
   date={2011},
   pages={135--156},
%   doi={10.1007/978-3-642-20300-8_8},
}

\bib{Hong 20}{article}{
   author={Hong, Wei},
   title={Holomorphic polyvector fields on toric varieties},
   journal={arXiv 2010.07053},
   date={2020},
}

\bib{M-R 19}{article}{
   author={Mandel, Travis},
   author={Ruddat, Helge},
   title={Tropical quantum field theory, mirror polyvector fields, and multiplicities of tropical curves },
   journal={arXiv 1902.07183},
   date={2019},
}

\bib{Oda}{book}{
   author={Oda, Tadao},
   title={Convex bodies and algebraic geometry},
   series={Ergebnisse der Mathematik und ihrer Grenzgebiete (3) [Results in
   Mathematics and Related Areas (3)]},
   volume={15},
   note={An introduction to the theory of toric varieties;
   Translated from the Japanese},
   publisher={Springer-Verlag, Berlin},
   date={1988},
   pages={viii+212},
   isbn={3-540-17600-4},
}

\bib{Roger 09}{article}{
   author={Roger, Claude},
   title={Gerstenhaber and Batalin-Vilkovisky algebras; algebraic,
   geometric, and physical aspects},
   journal={Arch. Math. (Brno)},
   volume={45},
   date={2009},
   number={4},
   pages={301--324},
   issn={0044-8753},
}

\bib{Xu 99}{article}{
   author={Xu, Ping},
   title={Gerstenhaber algebras and BV-algebras in Poisson geometry},
   journal={Comm. Math. Phys.},
   volume={200},
   date={1999},
   number={3},
   pages={545--560},
   issn={0010-3616},
   doi={10.1007/s002200050540},
}

\end{biblist}
\end{bibdiv}

\end{document}